
\documentclass{amsart}
\usepackage{amsfonts}
\usepackage{latexsym,amssymb,graphics}

\setcounter{MaxMatrixCols}{10}

\newtheorem{theorem}{Theorem}

\newtheorem{lemma}{Lemma}

\newtheorem{remark}{Remark}

\def\tbigcap{\bigcap}

\begin{document}
\title[Kronecker constants]{Upper and Lower Bounds for Kronecker constants
of Three-Element Sets of Integers}
\author{Kathryn E. Hare}
\address{Dept. of Pure Mathematics\\
University of Waterloo\\
Waterloo, Ont.\\
Canada, N2L 3G1}
\email{kehare@uwaterloo.ca}
\thanks{This research was supported in part by NSERC. The first author would
like to thank the Dept. of Mathematics at University of Hawaii for their
hospitality while this research was being done.}
\author{L. Thomas Ramsey}
\address{Dept. of Mathematics\\
University of Hawaii at Manoa\\
Honolulu, Hi\\
USA, 96822}
\email{ramsey@math.hawaii.edu}
\subjclass{Primary: 42A15, 43A46, 65T40}
\keywords{Kronecker set, interpolation, trigonometric approximation}

\begin{abstract}
Various upper and lower bounds are provided for the (angular) Kronecker constants of sets of integers.  Some examples are provided where the bounds are attained.  It is proved that $5/16$ bounds the angular Kronecker constants of 3-element sets of positive integers. However, numerous examples suggest that the minimum upper bound is $1/4$ for 3-element sets of positive integers.
\end{abstract}

\maketitle

\section{Introduction}

A subset $S$ of the dual of a compact, abelian group $G$ is called an $%
\varepsilon $-Kronecker set if for every continuous function $f$ mapping $S%
\mathbb{\ }$into $\mathbb{T}$, the set of complex numbers of modulo $1,$
there exists $x\in G$ such that
$$
\left\vert \gamma (x)-f(\gamma )\right\vert <\varepsilon \text{ for all }%
\gamma \in S\text{.}
$$
The infimum of such $\varepsilon $ is called the Kronecker constant, $\kappa
(S)$.

We continue with the notation of \cite{HR}, and define an angular Kronecker constant
$\alpha(S) \in [0,1/2]$ such that
$$
\kappa(S)=\left|e^{2\pi i \alpha(S)}-1 \right|
$$
In this note, the terminology, notations and results of \cite{HR} will be used.

This note is a supplement to \cite{HR}, giving some bounds for angular Kronecker constants of three-element sets of integers:
\begin{itemize}
\item
Theorem \ref{upperlowerbound} uses the full machinery of \cite{HR} to produce both upper and lower bounds for 3-element sets of non-zero, relatively prime integers with distinct absolute values.
\item
Theorem \ref{upperlowerbound} and two results of \cite{HR} are used to prove that $\alpha(S) \le 5/16$ for all $S$ consisting of non-zero integers with distinct absolute values.  See Theorem \ref{T:fivesixteenths}.  Examples suggest that one should be able to lower this upper bound to $1/4$ for $5/16$.
\end{itemize}

\section{Bounding Estimates for (Most) Three Element Sets of Integers}

\cite{HR} provides an easy upper bound (better than $1/2$) for any finite set $S$ of integers that
does not contain $0$:  
$$
\alpha(S) \le \dfrac 1 2 -\dfrac 1 {2d}
$$
where $d$ is the size of $S$.

When $d=2$, this gives $\alpha (S)\leq $ $1/4$ and this trivial upper bound
is sharp if and only if $S=\{-n,\,n\}$. For $d=3$, the bound is $1/3$ and
\cite{HR} shows that this is sharp when $S=\{-n,\,n,\,2n\}.$ A
consequence of our next theorem is that the angular Kronecker constant is
strictly less than $1/3$ for all other three element sets (that exclude $0$).

\begin{theorem}
\label{upperlowerbound}Suppose $\left\vert n_{1}\right\vert ,\left\vert
n_{2}\right\vert ,\left\vert n_{3}\right\vert $ are distinct and $\gcd
(n_{1},n_{2},n_{3})=1$. Assume $m=$ $\gcd (n_{2},n_{3})$ and that $(1/m,0)$
and $(r/n_{3},m/n_{3})$ $\ $generate the lattice $\mathcal{K}$ for some $r>0$%
. We have the following bounds on angular Kronecker constants:
$$
\alpha \{n_{1},n_{2},n_{3}\}\geq \frac{\left\vert n_{3}\right\vert }{%
2(r(\left\vert n_{2}\right\vert +\left\vert n_{3}\right\vert )+m(\left\vert
n_{1}\right\vert +\left\vert n_{3}\right\vert ))}
$$
and%
$$
\alpha \{n_{1},n_{2},n_{3}\}\leq \frac{E_{1}\left( 2\left\vert
n_{1}\right\vert \left\vert n_{2}\right\vert +\left\vert n_{3}\right\vert
(\left\vert n_{1}\right\vert +\left\vert n_{2}\right\vert )\right) }{%
\left\vert n_{3}\right\vert (\left\vert n_{1}\right\vert +\left\vert
n_{2}\right\vert )}
$$
where
$$
E_{1}=\max \left( \frac{m}{2(\left\vert n_{2}\right\vert +\left\vert
n_{3}\right\vert )},\frac{r}{2(\left\vert n_{1}\right\vert +\left\vert
n_{3}\right\vert )},\frac{\left\vert n_{3}\right\vert +2rm}{2\left(
r(\left\vert n_{2}\right\vert +\left\vert n_{3}\right\vert )+m(\left\vert
n_{1}\right\vert +\left\vert n_{3}\right\vert )\right) }\right) .
$$
\end{theorem}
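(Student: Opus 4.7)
The plan is to apply the lattice-duality framework from \cite{HR}, in which the angular Kronecker constant of a finite integer set $S$ is computed via a two-sided extremal problem on the dual lattice $\mathcal{K}$ associated to the integer relations among the characters $e^{2\pi i n_j x}$. Since $\gcd(n_1,n_2,n_3)=1$, the three characters determine a rank-two lattice, and the hypothesized generators $(1/m,0)$ and $(r/n_3,m/n_3)$ encode the divisibility data: the first generator corresponds to the direction in which $n_2$ and $n_3$ are simultaneously trivialized (possible because $m = \gcd(n_2,n_3)$), while the second incorporates a B\'ezout-type coefficient $r$ coming from the Smith normal form of the $2\times 3$ matrix relating the $n_j$.

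For the lower bound, I would exhibit a distinguished lattice vector (or equivalently, a ``bad'' target $f : S \to \mathbb{T}$) for which the duality estimate of \cite{HR} forces the approximation error to be at least the stated quantity. The expression $r(|n_2|+|n_3|) + m(|n_1|+|n_3|)$ is precisely the weighted $\ell^1$-norm (with weights $|n_1|,|n_2|,|n_3|$) of the lattice point produced by adding the two generators, and $|n_3|$ appears as the relevant height; so a single chosen lattice point yields the stated inequality after dividing by $2$ to pass to the angular scale.

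For the upper bound, I would compute a covering radius in the appropriate weighted metric and show that every point of a fundamental domain of $\mathcal{K}$ lies within $E_1$ of a lattice point. A fundamental parallelogram has three natural ``deep-hole'' candidates: the midpoint of the side along $(1/m,0)$ gives the term $m/(2(|n_2|+|n_3|))$; the midpoint of the side along $(r/n_3,m/n_3)$ gives $r/(2(|n_1|+|n_3|))$; and the parallelogram's interior midpoint, involving the sum of the two generators, gives the third term with numerator $|n_3|+2rm$. Taking the maximum over these three gives $E_1$, and rescaling by the normalization $(2|n_1||n_2|+|n_3|(|n_1|+|n_2|))/(|n_3|(|n_1|+|n_2|))$ from the \cite{HR} machinery produces the claimed upper bound on $\alpha(S)$.

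The main obstacle will be verifying that the three candidate deep holes genuinely enumerate the worst case for the covering radius, and that no other lattice vector provides a shorter distance that would improve the lower bound. This demands a careful case analysis on the signs of the $n_j$ and on the residue of $r$ modulo $m$, together with confirmation that the assumed generating set $\{(1/m,0),(r/n_3,m/n_3)\}$ is in fact a basis (rather than merely spanning a sublattice) of $\mathcal{K}$; the remainder of the argument is then bookkeeping using the explicit extremal formulas from \cite{HR}.
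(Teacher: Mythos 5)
Your proposal correctly identifies the ambient framework (the lattice $\mathcal{K}$ from \cite{HR}, a lower bound from a single bad point, an upper bound from a covering-type argument), but the mechanism you describe for producing the actual constants is not the one that works, and I do not believe it can be repaired in the form you state. The paper's proof does not compute a covering radius in a weighted metric with three ``deep holes'' at the midpoints of a fundamental parallelogram. Instead it treats the three pairwise constraints asymmetrically: writing $\beta = x - ry/m$ and $\Delta_t = y - tm/n_3$, the constraints $(2,3)$ and $(1,3)$ are reformulated as requiring $\Delta_t$ to lie in the intersection of two explicit intervals $J_1(s,E)\cap J_2(s,E)$, and the three terms in $E_1$ arise from a case analysis on how those two intervals overlap (namely $J_1\subseteq J_2$, $J_2\subseteq J_1$, or partial overlap), with the requirement that the overlap have length at least $m/|n_3|$ so that some integer $t$ puts $\Delta_t$ inside it. In particular the third term, with numerator $|n_3|+2rm$, is obtained by writing $E_1=E_0+\lambda$ (where $E_0$ is the lower bound) and choosing $\lambda$ just large enough to guarantee the minimum overlap length in the partial-overlap case; it is not the distance to the parallelogram's interior midpoint, and your ``three deep-hole candidates'' heuristic does not produce it. Your own caveat --- that one must verify the three candidates enumerate the worst case --- is exactly where the plan breaks: the relevant quantity is a max of three distinct seminorms (each vanishing on a line), and the extremal points for such a functional over a lattice are not the midpoints you name; moreover, if they were, you would obtain the exact value of $\alpha$, whereas the theorem's upper and lower bounds do not coincide.

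Two further specific gaps. First, the factor $\bigl(2|n_1||n_2|+|n_3|(|n_1|+|n_2|)\bigr)/\bigl(|n_3|(|n_1|+|n_2|)\bigr)$ is not a ``normalization from the machinery'': it is a genuine additional estimate, obtained by bounding the third constraint $(1,2)$ via the triangle inequality from the bounds on $|\Delta_t|$ and $|\beta - s/m + \Delta_t r/m|$ already secured for $(2,3)$ and $(1,3)$. Your sketch never explains how the third pairwise constraint is satisfied at all. Second, for the lower bound, the paper does not pick a lattice vector and compute its weighted $\ell^1$-norm; it observes that a necessary condition for $J_1(s,E)\cap J_2(s,E)\neq\emptyset$ is $|-\beta+s/m|\le c(E)$ for some integer $s$, and then takes $\beta=1/(2m)$ to force $c(E)\ge 1/(2m)$. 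Your $\ell^1$-norm interpretation of the denominator is suggestive but is not a proof, and you would still need to verify that the point you choose actually defeats every lattice translate simultaneously for all three constraints (the paper only needs it to defeat the two-interval intersection, which is a weaker and cleaner requirement).
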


\begin{proof}
Temporarily fix $(x,y)$ and put $\beta =x-ry/m$. For an integer $t$, put $%
\Delta _{t}=y-tm/n_{3}$. With this notation, the angular Kronecker constant
is the least constant $E$ such that for each $\beta $ there are integers $%
s,t $ such that
$$
\begin{aligned}
(2,3) &\quad:\quad\frac{\left\vert n_{3}\right\vert }{\left\vert n_{2}\right\vert
+\left\vert n_{3}\right\vert }\left\vert \Delta _{t}\right\vert \leq E \\
(1,3) &\quad:\quad\frac{\left\vert n_{3}\right\vert }{\left\vert n_{1}\right\vert
+\left\vert n_{3}\right\vert }\left\vert \beta -\frac{s}{m}+\Delta _{t}\frac{%
r}{m}\right\vert \leq E \\
(1,2) &\quad:\quad\frac{\left\vert n_{2}(\beta -\frac{s}{m}+\Delta _{t}\frac{r}{m}%
)-n_{1}\Delta _{t}\right\vert }{\left\vert n_{1}\right\vert +\left\vert
n_{2}\right\vert }\leq E.
\end{aligned}
$$

The requirements $(1,3)$ and $(2,3)$ can be satisfied if and only for each $%
\beta $ there are integers $s,t$ such that $\Delta _{t}\in
J_{1}(s,E)\tbigcap J_{2}(s,E)$ where $J_{1},J_{2}$ are the intervals
$$
\begin{aligned}
J_{1}(s,E) &\quad=\quad\left[ -E\left( \frac{\left\vert n_{2}\right\vert +\left\vert
n_{3}\right\vert }{\left\vert n_{3}\right\vert }\right) ,E\left( \frac{%
\left\vert n_{2}\right\vert +\left\vert n_{3}\right\vert }{\left\vert
n_{3}\right\vert }\right) \right] \text{ and} \\
J_{2}(s,E) &\quad=\quad\left[ \frac{m}{r}\left( -E\left( \frac{\left\vert
n_{1}\right\vert +\left\vert n_{3}\right\vert }{\left\vert n_{3}\right\vert }%
\right) -\beta +\frac{s}{m}\right) ,\frac{m}{r}\left( E\left( \frac{%
\left\vert n_{1}\right\vert +\left\vert n_{3}\right\vert }{\left\vert
n_{3}\right\vert }\right) -\beta +\frac{s}{m}\right) \right] .
\end{aligned}
$$
If there is some choice of $\beta $ such that $J_{1}(s,E)\tbigcap J_{2}(s,E)$
is empty for all $s\in \mathbb{Z}$, then clearly $(1,3)$ and $(2,3)$ cannot
be simultaneously satisfied for that choice of $E$. Hence it is necessary
that for each $\beta $ there be an integer $s$ with the right end of $%
J_{1}(s,E)\geq $ left end of $J_{2}(s,E)$ and the right end of $%
J_{2}(s,E)\geq $ left end of $J_{1}(s,E)$. This implies that if we let
$$
c(E)=\frac{E}{\left\vert n_{3}\right\vert }\left( \frac{r}{m}(\left\vert
n_{2}\right\vert +\left\vert n_{3}\right\vert )+\left\vert n_{1}\right\vert
+\left\vert n_{3}\right\vert \right) ,
$$
then a necessary condition on $E$ is that for each $\beta $ there is an
integer $s$ satisfying
$$
-c(E)\leq -\beta +\frac{s}{m}\leq c(E).
$$
However, if $\beta =1/2m$ and $c(E)<1/2m$ then this inequality cannot hold
for any integer $s$. Thus a necessary condition for $E$ (a lower bound on
the Kronecker constant) is that $E$ must satisfy%
$$
\frac{E}{\left\vert n_{3}\right\vert }\left( \frac{r}{m}(\left\vert
n_{2}\right\vert +\left\vert n_{3}\right\vert )+\left\vert n_{1}\right\vert
+\left\vert n_{3}\right\vert \right) \geq \frac{1}{2m}
$$
and this gives the lower bound stated in the theorem.

Now put $E_{1}=E_{0}+\lambda $, where $E_{0}$ is the lower bound and $%
\lambda >0$ is to be determined. Then $c(E_{1})\geq 1/2m$, so for any $x,y$
there will be an integer $s$ such that $J_{1}(s,E_{1})\tbigcap
J_{2}(s,E_{1}) $ is non-empty. If, in addition, the length of the overlap of
the two intervals is at least $m/\left\vert n_{3}\right\vert $, then we can
be sure that for any choice of $y$, there will be an integer $t$ with $%
\Delta _{t}=y-tm/n_{3}\in J_{1}(s,E_{1})\tbigcap J_{2}(s,E_{1})$.

The length of the overlap will be either%
\begin{eqnarray}
2E_{1}\left( \frac{\left\vert n_{2}\right\vert +\left\vert n_{3}\right\vert
}{\left\vert n_{3}\right\vert }\right) \text{ if }J_{1} &\subseteq &J_{2}
\label{1} \\
2E_{1}\left( \frac{\left\vert n_{1}\right\vert +\left\vert n_{3}\right\vert
}{\left\vert n_{3}\right\vert }\right) \frac{m}{r}\text{ if }J_{2}
&\subseteq &J_{1}  \label{2}
\end{eqnarray}%
or%
\begin{equation}
E_{1}\left( \frac{\left\vert n_{2}\right\vert +\left\vert n_{3}\right\vert }{%
\left\vert n_{3}\right\vert }\right) +\left( E_{1}\frac{\left\vert
n_{1}\right\vert +\left\vert n_{3}\right\vert }{\left\vert n_{3}\right\vert }%
-\left\vert -\beta +\frac{s}{m}\right\vert \right) \frac{m}{r}\text{
otherwise.}  \label{3}
\end{equation}

Obviously, if
$$
E_{1}\geq \max \left( \frac{m}{2(\left\vert n_{2}\right\vert +\left\vert
n_{3}\right\vert )},\frac{r}{2(\left\vert n_{1}\right\vert +\left\vert
n_{3}\right\vert )}\right)
$$
then both (\ref{1}) and (\ref{2}) will be at least $m/\left\vert
n_{3}\right\vert $.

Now consider (\ref{3}). Note that the choice of $E_{0}$ ensures that for any
$\beta $ there is an integer $s$ with
$$
-\left\vert -\beta +\frac{s}{m}\right\vert \geq -\frac{1}{2m}=\frac{-E_{0}}{%
\left\vert n_{3}\right\vert }\left( \frac{r}{m}(\left\vert n_{2}\right\vert
+\left\vert n_{3}\right\vert )+\left\vert n_{1}\right\vert +\left\vert
n_{3}\right\vert \right) .
$$
Thus
$$
E_{1}\left( \frac{\left\vert n_{2}\right\vert +\left\vert n_{3}\right\vert }{%
\left\vert n_{3}\right\vert }\right) +\left( E_{1}\frac{\left\vert
n_{1}\right\vert +\left\vert n_{3}\right\vert }{\left\vert n_{3}\right\vert }%
-\left\vert -\beta +\frac{s}{m}\right\vert \right) \frac{m}{r}\geq \frac{%
\lambda \left( r(\left\vert n_{2}\right\vert +\left\vert n_{3}\right\vert
)+m(n_{1}+\left\vert n_{3}\right\vert )\right) }{r\left\vert
n_{3}\right\vert }.
$$
and therefore we can satisfy the inequality (\ref{3}) $\geq m/\left\vert
n_{3}\right\vert $ if we choose
$$
\lambda \geq \frac{rm}{r(\left\vert n_{2}\right\vert +\left\vert
n_{3}\right\vert )+m(\left\vert n_{1}\right\vert +\left\vert
n_{3}\right\vert )},
$$
that is,
$$
E_{1}\geq \frac{\left\vert n_{3}\right\vert +2rm}{2\left( r(\left\vert
n_{2}\right\vert +\left\vert n_{3}\right\vert )+m(\left\vert
n_{1}\right\vert +\left\vert n_{3}\right\vert )\right) }.
$$
The construction ensures that if we take $E=E_{1}$, then both inequalities $%
(1,3)$ and $(2,3)$ can be simultaneously satisfied for each $x,y,$ with a
suitable choice of integers $s,t$.

But then,
$$
\left\vert \Delta _{t}\right\vert \leq E_{1}\left( \frac{\left\vert
n_{2}\right\vert +\left\vert n_{3}\right\vert }{\left\vert n_{3}\right\vert }%
\right) \text{ and }\left\vert \beta -\frac{s}{m}+\Delta _{t}\frac{r}{m}%
\right\vert \leq E_{1}\left( \frac{\left\vert n_{1}\right\vert +\left\vert
n_{3}\right\vert }{\left\vert n_{3}\right\vert }\right)
$$
and therefore
$$
\begin{aligned}
(1,2) &\quad \leq \quad\frac{\left\vert n_{2}\right\vert E_{1}\left( \frac{\left\vert
n_{1}\right\vert +\left\vert n_{3}\right\vert }{\left\vert n_{3}\right\vert }%
\right) +\left\vert n_{1}\right\vert E_{1}\left( \frac{\left\vert
n_{2}\right\vert +\left\vert n_{3}\right\vert }{\left\vert n_{3}\right\vert }%
\right) }{\left\vert n_{1}\right\vert +\left\vert n_{2}\right\vert } \\
&\quad=\quad\frac{E_{1}\left( 2\left\vert n_{1}\right\vert \left\vert
n_{2}\right\vert +\left\vert n_{3}\right\vert (\left\vert n_{1}\right\vert
+\left\vert n_{2}\right\vert )\right) }{\left\vert n_{3}\right\vert
(\left\vert n_{1}\right\vert +\left\vert n_{2}\right\vert )}.
\end{aligned}
$$
This verifies the claimed upper bound on the angular Kronecker constant.
\end{proof}

To apply this result to show that the Kronecker constant is less than $1/3$
for any three element set not containing $0$, other than the sets $%
\{-n,n,2n\},$ it is convenient to first record some preliminary
calculations. The arguments are elementary and we will only give the main
idea for each.

\begin{lemma}
\label{L1}Assume $0<\left\vert n_{1}\right\vert <n_{2}<n_{3}$ and that
integers $r,m\geq 1$.

(i) If $E_{1}\leq n_{3}/4(\left\vert n_{1}\right\vert +n_{3})$, then
$$
\frac{E_{1}\left( 2\left\vert n_{1}\right\vert n_{2}+n_{3}(\left\vert
n_{1}\right\vert +n_{2})\right) }{n_{3}(\left\vert n_{1}\right\vert +n_{2})}%
\leq \frac{5}{16}.
$$

(ii) If $r+m\geq 5$, then%
$$
\frac{2\left\vert n_{1}\right\vert n_{2}+n_{3}(\left\vert n_{1}\right\vert
+n_{2})}{(\left\vert n_{1}\right\vert +n_{2})\left(
r(n_{2}+n_{3})+m(\left\vert n_{1}\right\vert +n_{3})\right) }\leq \frac{5}{16}%
.
$$

(iii) If $(r,m)\neq (1,1)$, then
$$
\frac{\left( 2\left\vert n_{1}\right\vert n_{2}+n_{3}(\left\vert
n_{1}\right\vert +n_{2})\right) (2+\min (r,m))}{4(\left\vert
n_{1}\right\vert +n_{2})\left( r(n_{2}+n_{3})+m(\left\vert n_{1}\right\vert
+n_{3})\right) }\leq \frac{5}{16}.
$$
\end{lemma}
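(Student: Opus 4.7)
The plan is to write $a=|n_1|$, $b=n_2$, $c=n_3$ so that $0<a<b<c$ and $r,m\geq 1$ are integers, and to reduce each part to an elementary algebraic inequality. The key observation is that all three parts can be handled by first proving (i), which really amounts to the single inequality
$$2ab + c(a+b) \leq \tfrac{5}{4}(a+b)(a+c),$$
and then using this in (ii) and (iii) after suitably bounding the denominator.

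For (i), since the left-hand side is increasing in $E_1$, I would substitute the endpoint $E_1 = c/(4(a+c))$. Clearing denominators, the claim becomes $4(2ab + c(a+b)) \leq 5(a+c)(a+b)$, which simplifies to $3ab \leq 5a^2 + c(a+b)$. Using $c>b$, it suffices to verify $3ab \leq 5a^2 + b(a+b)$, i.e.\ $0 \leq 5a^2 - 2ab + b^2 = 4a^2 + (a-b)^2$, which is manifestly true. For (ii), I would use $b>a$ to get $r(b+c) + m(a+c) \geq (r+m)(a+c) \geq 5(a+c)$, then plug in the bound from (i) to obtain
$$\frac{2ab + c(a+b)}{(a+b)(r(b+c)+m(a+c))} \leq \frac{(5/4)(a+b)(a+c)}{5(a+b)(a+c)} = \frac{1}{4} \leq \frac{5}{16}.$$

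For (iii), the same substitution reduces the target inequality to
$$(2 + \min(r,m) - m)(a+c) \leq r(b+c).$$
I would split into two cases. If $m \leq r$, then $\min(r,m)=m$ and the requirement becomes $2(a+c) \leq r(b+c)$; the hypothesis $(r,m)\neq(1,1)$ forces $r\geq 2$ in this case, and then $r(b+c) \geq 2(b+c) > 2(a+c)$ since $b>a$. If $m > r$, then $m \geq r+1$ and $\min(r,m)=r$, so the left-hand side is at most $(a+c)$, while $r(b+c) \geq b+c > a+c$. The main obstacle is really only the bookkeeping in part (iii): one must carefully exploit the exclusion of $(r,m)=(1,1)$ in exactly one sub-case, while the strict inequality $b>a$ does all the heavy lifting in the others. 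Everything else is direct algebraic manipulation reducing to part (i)'s core inequality.
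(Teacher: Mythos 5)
Your proof is correct. All three reductions check out: in (i) the substitution $E_1=c/(4(a+c))$ (with $a=|n_1|$, $b=n_2$, $c=n_3$) reduces the claim to $3ab\le 5a^2+c(a+b)$, which follows from $5a^2-2ab+b^2=4a^2+(a-b)^2\ge 0$ and $c>b$; in (ii) the bound $r(b+c)+m(a+c)\ge(r+m)(a+c)\ge 5(a+c)$ combined with the core inequality gives $1/4\le 5/16$; and in (iii) the reduction to $(2+\min(r,m)-m)(a+c)\le r(b+c)$ together with the two cases $m\le r$ (forcing $r\ge 2$) and $m>r$ (forcing $2+r-m\le 1$) is airtight. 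Your route differs from the paper's in a structurally interesting way: the paper treats the three parts by separate elementary estimates --- for (i) a case split on $|n_1|\ge n_2/2$ versus $|n_1|\le n_2/2$, for (ii) the inequality $(|n_1|+n_2)(rn_2+m|n_1|)\ge 7|n_1|n_2$ (which leads to a mediant-style bound $2/7<5/16$), and for (iii) writing $r+m=2s+l$ with $s=\min(r,m)$ and splitting on $s\ge 2$ versus $s=1,\ l\ge 1$ --- whereas you isolate the single inequality $2ab+c(a+b)\le\frac54(a+b)(a+c)$ as the engine for all three parts and push the remaining work onto the denominators. Your unified approach is cleaner and makes the role of the constant $5/16$ more transparent (it is exactly $\frac54\cdot\frac14$); the paper's part (ii) argument, by contrast, yields the slightly sharper intermediate constant $2/7$, and its hints are shorter at the cost of leaving more verification to the reader. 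Either argument is acceptable; yours is arguably the better exposition.
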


\begin{proof}
(i) Consider the cases $\left\vert n_{1}\right\vert \geq n_{2}/2$ and $%
\left\vert n_{1}\right\vert \leq n_{2}/2$ separately. In the first case use
the fact that $4n_{1}^{2}\geq 2\left\vert n_{1}\right\vert n_{2}$; in the
second, use the inequality $n_{3}(\left\vert n_{1}\right\vert +n_{2})\geq
3\left\vert n_{1}\right\vert n_{2}$.

(ii) The key idea here is the inequality $(\left\vert n_{1}\right\vert
+n_{2})\left( rn_{2}+m\left\vert n_{1}\right\vert \right) \geq 7\left\vert
n_{1}\right\vert n_{2}$.

(iii) Here it is convenient to put $s=\min (r,m)$ and write $r+m=2s+l$ for $%
l\geq 0$. \ Then consider the two possibilities: $s\geq 2$, $l\geq 0$ and $%
s=1$, $l\geq 1$.
\end{proof}

\begin{lemma}
\label{L4}Assume $0<\left\vert n_{1}\right\vert <n_{2}<n_{3}$. If $r=m=1$,
then $n_{1}<0$ and $n_{3}=\left\vert n_{1}\right\vert +n_{2}.$
\end{lemma}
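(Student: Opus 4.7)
The plan is to translate the conditions $r=1,\,m=1$ into an explicit congruence among $n_1,n_2,n_3$, and then to use the size hypothesis $|n_1|<n_2<n_3$ to pin $n_1$ down completely.

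First I would give an intrinsic description of $\mathcal{K}$. Unwinding the parametrization used in the proof of Theorem~\ref{upperlowerbound} and comparing with the definition in \cite{HR}, the lattice $\mathcal{K}$ can be identified with $\mathbb{Z}^{2}+\mathbb{Z}\cdot(n_1/n_3,\,n_2/n_3)\subset\mathbb{R}^{2}$, which arises as the image of the integer shifts $(k_1,k_2,k_3)\mapsto(k_1-(n_1/n_3)k_3,\,k_2-(n_2/n_3)k_3)$. Under $m=\gcd(n_2,n_3)=1$, the class of $(n_1/n_3,n_2/n_3)$ has order exactly $n_3$ in $(\mathbb{R}/\mathbb{Z})^{2}$, so $\mathcal{K}/\mathbb{Z}^{2}\cong\mathbb{Z}/n_3\mathbb{Z}$ is cyclic, generated by that class.

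Next, since $(1/m,0)=(1,0)\in\mathbb{Z}^{2}$ dies in the quotient, the class of $(r/n_3,1/n_3)$ by itself must generate $\mathcal{K}/\mathbb{Z}^{2}$. Writing $(r,1)\equiv k\cdot(n_1,n_2)\pmod{n_3}$ and comparing coordinates forces $k\equiv n_2^{-1}$ and hence $r\equiv n_1 n_2^{-1}\pmod{n_3}$, where $n_2^{-1}$ is taken in $(\mathbb{Z}/n_3\mathbb{Z})^{\times}$. Thus the hypothesis $r=1$ is equivalent to $n_1\equiv n_2\pmod{n_3}$. Since $|n_1-n_2|\le|n_1|+n_2<2n_3$, the difference $n_1-n_2$ lies in $\{-n_3,0,n_3\}$; the value $0$ is excluded by $|n_1|<n_2$, and $+n_3$ is excluded because $n_1=n_2+n_3$ would give $|n_1|>n_3>n_2$. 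This leaves $n_1=n_2-n_3$, so $n_1<0$ and $n_3=n_2+|n_1|$, as claimed.

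The main obstacle will be the first step: extracting from \cite{HR} the correct intrinsic description of $\mathcal{K}$ (of which $(1/m,0)$ and $(r/n_3,m/n_3)$ are merely one convenient $\mathbb{Z}$-basis), so that $r$ acquires an unambiguous arithmetic meaning independent of the choice of basis. Once this identification is in place, the remaining work is routine modular arithmetic and the short case analysis on $|n_1-n_2|$ outlined above.
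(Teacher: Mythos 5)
Your proof is correct and follows essentially the same route as the paper's: both extract from $r=m=1$ the existence of $t\equiv n_2^{-1}\pmod{n_3}$ with $tn_1\equiv tn_2\equiv 1\pmod{n_3}$ (your $k$), deduce $n_3\mid (n_2-n_1)$, and then use $\left\vert n_1\right\vert + n_2 < 2n_3$ together with $\left\vert n_1\right\vert < n_2$ to force $n_2-n_1=n_3$ with $n_1<0$. The only difference is that you spell out the intrinsic description of the lattice $\mathcal{K}$ (correctly), which the paper leaves implicit in the phrase ``these assumptions imply.''
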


\begin{proof}
These assumptions imply that there is an integer $t$ with $tn_{2}\equiv
tn_{1}\equiv 1\text{\thinspace mod\thinspace }n_{3}$. It follows that $\gcd
(t,n_{3})=1$ and $t(n_{1}-n_{2})\equiv 0\text{\thinspace mod\thinspace }%
n_{3}.$ Hence $n_{3}$ divides $(n_{2}-n_{1})$. This is not possible if $%
n_{1}>0$ and can only occur with $n_{1}<0$ if $n_{2}-n_{1}=n_{3}$, i.e., $%
n_{3}=n_{2}+\left\vert n_{1}\right\vert $ since $\left\vert n_{1}\right\vert
+n_{2}<2n_{3}$.
\end{proof}

\begin{theorem}\label{T:fivesixteenths}
Let $S=\{n_{1},n_{2},n_{3}\}$ where $n_{j}\neq 0$. If $\{n_{1},n_{2},n_{3}\}%
\neq \{-n,n,2n\}$ for some integer $n$, then $\alpha (S)\leq 5/16.$
\end{theorem}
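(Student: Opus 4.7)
The plan is to reduce to the setting of Theorem \ref{upperlowerbound} and bound the resulting upper bound via Lemmas \ref{L1} and \ref{L4}. Since $\alpha$ is invariant under negation, scaling, and permutation of $S$, I may assume $\gcd(n_1,n_2,n_3)=1$. The two results from \cite{HR} alluded to in the introduction dispose of the configurations in which two of $|n_1|,|n_2|,|n_3|$ coincide (i.e., $S$ contains a pair $\{-n,n\}$), leaving $\{-n,n,2n\}$ as the only excluded set. In the remaining case I may arrange $0 < |n_1| < n_2 < n_3$ so that the full hypotheses of Theorem \ref{upperlowerbound} are met, with lattice parameters $r,m \ge 1$.

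The upper bound delivered by Theorem \ref{upperlowerbound} has the form $U(E_1)$, where $U(E) = E(2|n_1|n_2 + n_3(|n_1|+n_2))/(n_3(|n_1|+n_2))$ is linear and increasing in $E$ and $E_1 = \max(T_1,T_2,T_3)$ is the maximum of the three explicit terms. By monotonicity $U(E_1) = \max_i U(T_i)$, so it suffices to show $U(T_i) \le 5/16$ for each $i$. For $T_1 = m/(2(n_2+n_3))$ and $T_2 = r/(2(|n_1|+n_3))$ the hypothesis $E \le n_3/(4(|n_1|+n_3))$ of Lemma \ref{L1}(i) is routine: for $T_1$, writing $n_2 = am$ and $n_3 = bm$ with $b \ge 2$ reduces it to the elementary $2(a+b) \le b(a+b)$, and for $T_2$ it follows from the bound on $r$ inherent in the lattice generators from \cite{HR}. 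Thus $U(T_1), U(T_2) \le 5/16$.

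The crux is $T_3 = (n_3+2rm)/(2D)$ with $D = r(n_2+n_3)+m(|n_1|+n_3)$. Writing $U(T_3) = (1+2rm/n_3) \cdot (2|n_1|n_2+n_3(|n_1|+n_2))/(2D(|n_1|+n_2))$ displays it as a product of a correction factor and a quantity that Lemma \ref{L1}(ii) bounds by $5/32$ when $r+m \ge 5$ and Lemma \ref{L1}(iii) bounds by $5/(8(2+\min(r,m)))$ when $(r,m) \ne (1,1)$. A case split on $(r,m)$ covers every configuration: moderate $\min(r,m)$ with $\max(r,m)$ small relative to $n_3$ goes via Lemma \ref{L1}(iii); large $r+m$ goes via Lemma \ref{L1}(ii); and the remaining case $(r,m) = (1,1)$ is handled by Lemma \ref{L4}, which forces $n_1<0$ and $n_3 = |n_1|+n_2$, after which a direct substitution using the strict inequality $|n_1| < n_2$ yields $U(T_3) \le 5/16$.

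The principal obstacle is managing the case analysis for $T_3$, above all the subcase $(r,m)=(1,1)$. This subcase is where the excluded extremal set $\{-n,n,2n\}$ lives (corresponding to $|n_1|=n_2=n$, $n_3=2n$, giving $\alpha=1/3 > 5/16$), so the strict inequality $|n_1| < n_2$ secured by the reductions in Step~1 is precisely what permits the bound to drop below $5/16$ in the remaining configurations with $n_3 = |n_1|+n_2$.
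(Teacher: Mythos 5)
Your overall strategy is the same as the paper's: reduce to $0<|n_1|<n_2<n_3$ via the results of \cite{HR}, feed the set into Theorem \ref{upperlowerbound}, and bound $U(T_i)$ for each of the three branches of $E_1$ using Lemma \ref{L1}, with Lemma \ref{L4} isolating the $(r,m)=(1,1)$ case. The handling of $T_1$, $T_2$, and the $r+m\ge 5$ branch of $T_3$ is essentially the paper's argument (your factor $1+2rm/n_3\le 2$ is exactly the paper's use of $r\le n_3/2m$). One small omission in the reduction: Theorem \ref{upperlowerbound} requires $r>0$, and the paper gets this by separately invoking the rectangular-lattice result of \cite{HR} when $r=0$ (bound $1/5$) and replacing $n_1$ by $-n_1$ when $r<0$; your proposal asserts $r,m\ge 1$ without addressing $r=0$.

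The genuine gap is in the $T_3$ analysis: your case split is not exhaustive, and the missing cases cannot be closed analytically. Lemma \ref{L1}(iii) only absorbs the correction factor when $1+2rm/n_3\le (2+\min(r,m))/2$, i.e.\ when $4\max(r,m)\le n_3$; since $r+m\le 4$ forces $\max(r,m)\le 3$, this is guaranteed only for $n_3\ge 12$. Likewise your claim that the $(r,m)=(1,1)$ sum-set case follows by ``direct substitution'' fails for small $n_3$: there $D=3n_3$, $T_3=(n_3+2)/(6n_3)$, and $|n_1|n_2\le n_3^2/4$ gives $U(T_3)\le 1/4+1/(2n_3)$, which exceeds $5/16$ unless $n_3\ge 8$; indeed for $\{-1,2,3\}$ one computes $U(T_3)=65/162\approx 0.401$, so Theorem \ref{upperlowerbound} alone does not yield $5/16$ for that set. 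The paper closes both holes in one stroke by restricting the analytic argument to $n_3\ge 12$ and verifying the finitely many sets with $n_3<12$ by its computer algorithm (the maximum found being $1/4$, attained at $\{1,2,3\}$). Without that finite verification, or a sharper replacement argument for small $n_3$, your proof does not cover all configurations.
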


\begin{proof}
When the $\left\vert n_{j}\right\vert $'s are not distinct, \cite{HR} proves that the angular Kronecker constant is at most $3/10$. Thus there is
no loss of generality in assuming $0<\left\vert n_{1}\right\vert
<n_{2}<n_{3} $.

Assume $m=\gcd (n_{2},n_{3})$ and that the lattice is generated by $(1/m,0)$%
, $(r/n_{2},m/n_{3})$ where we choose $-n_{3}/2m<r\leq n_{3}/2m$. If $r=0$
we are in the rectangular lattice case developed in \cite{HR}, for which
the angular Kronecker constant is at most $1/5$.

We can assume $r>0$ by replacing $n_{1}$ by $-n_{1},$ if necessary. Of
course, $m\leq n_{3}/2$.

According to the previous theorem it will be enough to prove that
$$
\frac{E_{1}\left( 2\left\vert n_{1}\right\vert n_{2}+n_{3}(\left\vert
n_{1}\right\vert +n_{2})\right) }{n_{3}(\left\vert n_{1}\right\vert +n_{2})}%
\leq \frac{5}{16}
$$
where
$$
E_{1}=\max \left( \frac{m}{2(n_{2}+n_{3})},\frac{r}{2(\left\vert
n_{1}\right\vert +n_{3})},\frac{n_{3}+2rm}{2\left(
r(n_{2}+n_{3})+m(\left\vert n_{1}\right\vert +n_{3})\right) }\right) .
$$

As $r,m\leq n_{3}/2$, if $E_{1}=m/(2(n_{2}+n_{3}))$ or $r/(2(\left\vert
n_{1}\right\vert +n_{3}))$, then $E_{1}\leq n_{3}/(4(\left\vert
n_{1}\right\vert +n_{3}))$ and calling upon Lemma \ref{L1}(i) gives the
desired result.

Otherwise,
$$
E_{1}=\frac{n_{3}+2rm}{2\left( r(n_{2}+n_{3})+m(\left\vert n_{1}\right\vert
+n_{3})\right) }.
$$
Since $r\leq n_{3}/2m$,
$$
E_{1}\leq \frac{n_{3}}{r(n_{2}+n_{3})+m(\left\vert n_{1}\right\vert +n_{3})}
$$
and thus Lemma \ref{L1}(ii) gives the bound if $r+m\geq 5$.

So suppose $r+m\leq 4$ and hence $\max (r,m)\leq 3.$ If $n_{3}\geq 12$, $%
\max (r,m)\leq n_{3}/4,$ thus
$$
E_{1}\leq \frac{n_{3}+n_{3}\min (r,m)/2}{2(r(n_{2}+n_{3})+m(\left\vert
n_{1}\right\vert +n_{3}))}=\frac{n_{3}(2+\min (r,m))}{4(r(n_{2}+n_{3})+m(%
\left\vert n_{1}\right\vert +n_{3}))}.
$$
and if $(r,m)\neq (1,1)$ we can appeal to Lemma \ref{L1}(iii)

From Lemma \ref{L4} we know that the case $r=m=1$ can only arise if $%
\{\left\vert n_{1}\right\vert ,n_{2},n_{3}\}$ is a sum set. Then $E_{1}\leq
(n_{3}+2)/6n_{3}$. Moreover, $\left\vert n_{1}\right\vert n_{2}\leq
n_{3}^{2}/4$, so for $n_{3}\geq 12$,%
$$
\frac{E_{1}\left( 2\left\vert n_{1}\right\vert n_{2}+n_{3}(\left\vert
n_{1}\right\vert +n_{2})\right) }{n_{3}(\left\vert n_{1}\right\vert +n_{2})}%
\leq \left( \frac{1}{6}+\frac{1}{3n_{3}}\right) \frac{3}{2}\leq \frac{7}{24}<%
\frac{5}{16}.
$$

For the three element sets with $n_{3}<12$, we apply our computer algorithm.
The greatest angular Kronecker constant is $1/4,$ occuring with the set $%
\{1,2,3\}$ (and its multiples).
\end{proof}

\begin{remark}
We conjecture that $\alpha \{n_{1},n_{2},n_{3}\}\leq 1/4$ for all three
element sets other than $\{-n,n,2n\}$.  In \cite{HR} this was proved for the rectangular lattice case and for sum sets. As well, we have run our computer
algorithm on all three element sets of positive integers with $n_{3}\leq 50$
and the greatest Kronecker constant is $1/4$, occuring only on the integer
multiples of $\{1,2,3\}$. 
\end{remark}

\end{document}